
\documentclass{article}
\usepackage{amssymb}
\usepackage{amsmath}

\setcounter{MaxMatrixCols}{10}

\newtheorem{theorem}{Theorem}

\newtheorem{corollary}[theorem]{Corollary}

\newtheorem{definition}[theorem]{Definition}

\newenvironment{proof}[1][Proof]{\noindent\textbf{#1.} }{\ \rule{0.5em}{0.5em}}
\input{tcilatex}
\begin{document}

\title{On Pfaffian and determinant of one type of skew centrosymmetric
matrices }
\author{{\small Fatih YILMAZ}$^{a}${\small \thanks{%
Corresponding author} Tomohiro SOGABE}$^{b}${\small \thanks{%
e-mails: fatihyilmaz@gazi.edu.tr, sogabe@na.cse.nagoya-u.ac.jp,
e.kirklar@gazi.edu.tr },} {\small Emrullah KIRKLAR}$^{a}${\small \ } \\
$^{a}${\small Polatl\i\ Art and Science Faculty, Gazi University, Turkey}\\
$^{b}${\small Graduate School of Engineering, Nagoya University, Japan}}
\maketitle

\begin{abstract}
This paper is dedicated to compute Pfaffian and determinant of one type of
skew centrosymmetric matrices in terms of general number sequence of second
order.

\textbf{Key words:} Pfaffian; determinant; skew centrosymmetric matrix
\end{abstract}

\section{Introduction}

The \textit{determinant\ }is one of the basic parameters in matrix theory.
For an $n$-square matrix $A$, it is defined by 
\begin{equation*}
\det (A)=\underset{\sigma \in S_{n}}{\sum }\mathrm{sgn}(\sigma )\underset{i=1%
}{\overset{n}{\prod }}a_{i\sigma (i)}.
\end{equation*}

The \textit{Pfaffian }which is\textit{\ }intimately related to determinant,
was introduced by Cayley, denoted by $Pf(A),$ is defined by 
\begin{equation*}
\mathrm{Pf}(A)=\underset{\pi \in S_{n}}{\sum }\mathrm{sgn}(\pi )_{2n}%
\underset{i=1}{\overset{n}{\prod }}a_{\pi (2i-1)\pi (2i)}.
\end{equation*}

The Pfaffian of a skew symmetric matrix is a quantity closely related to the
determinant. Cayley's Theorem says that the square of the Pfaffian of a
matrix is equal to the determinant of the matrix. In other words, for an $n$%
-square skew-symmetric matrix $A$, 
\begin{equation*}
\det (A)=[\mathrm{Pf}(A)]^{2}.
\end{equation*}

A matrix $A$ is a \textit{centrosymmetric matrix} if $A=JAJ^{-1}$ where $J$
is anti-diagonal matrix whose anti-diagonal entries are one and others are
zero. If $A=-AJA^{-1},$ it is said to be \textit{skew-centrosymmetric matrix}%
. This matrix family have wide applications in many fields of science such
as, numerical solution of certain differential equations, digital signal
processing, information theory, statistics, linear systems theory, some
Markov processes and so on (see \cite[2, 3, 4, 5, 6]{1}).

This paper highlights the close connections among the Pfaffian, the
determinant and general number sequences of second order.

\section{Pfaffian and Skew centrosymmetric matrices}

\begin{definition}
Let us define $n$-square matrices $A_{n}=[a_{i,j}]$ and $B_{n}=[b_{i,j}]$ in
the given form 
\begin{equation*}
\lbrack a_{i,j}]=\left\{ 
\begin{array}{rl}
a, & \text{for }j=i+1 \\ 
-a, & \text{for }i=j+1 \\ 
0, & \text{otherwise}%
\end{array}%
\right. ,\text{ \ \ \ \ \ }[b_{i,j}]=\left\{ 
\begin{array}{rl}
(-1)^{i+1}b, & \text{for }i+j=n+1 \\ 
0, & \text{otherwise}%
\end{array}%
\right.
\end{equation*}%
where $1\leq i,j\leq n.$
\end{definition}

\begin{definition}
Let us define $2\times 2$-block matrices as below.%
\begin{equation*}
\mathcal{F}_{n}=\left( 
\begin{array}{cc}
A_{k} & B_{k} \\ 
(-1)^{k}B_{k} & A_{k}%
\end{array}%
\right) \text{ and \ \ }\mathcal{G}_{n}=\left( 
\begin{array}{cc}
A_{k} & -B_{k} \\ 
(-1)^{k+1}B_{k} & A_{k}%
\end{array}%
\right) .
\end{equation*}
\end{definition}

For example, for $n=10$, the $n$-square ($n=2k,$ $k$ is any positive
integer) skew centrosymmetric matrix $\mathcal{F}_{n}$ will be in the
following form:

\begin{equation*}
\mathcal{F}_{10}=\left( 
\begin{array}{ccccc|ccccc}
0 & a & 0 & 0 & 0 & 0 & 0 & 0 & 0 & b \\ 
-a & 0 & a & 0 & 0 & 0 & 0 & 0 & -b & 0 \\ 
0 & -a & 0 & a & 0 & 0 & 0 & {b} & 0 & 0 \\ 
0 & 0 & -a & 0 & a & 0 & -b & 0 & 0 & 0 \\ 
0 & 0 & 0 & -a & 0 & b & 0 & 0 & 0 & 0 \\ \hline
0 & 0 & 0 & 0 & -b & 0 & a & 0 & 0 & 0 \\ 
0 & 0 & 0 & b & 0 & -a & 0 & a & 0 & 0 \\ 
0 & 0 & {-b} & 0 & 0 & 0 & -a & 0 & a & 0 \\ 
0 & b & 0 & 0 & 0 & 0 & 0 & -a & 0 & a \\ 
-b & 0 & 0 & 0 & 0 & 0 & 0 & 0 & -a & 0%
\end{array}%
\right) .
\end{equation*}

\begin{definition}
Let us consider couple-recurrences given below.%
\begin{align*}
&f_{n}=bg_{n-1}+a^{2}f_{n-2}\text{ \ \ for \ }f_{1}=b, \\
&g_{n}=-bf_{n-1}+a^{2}g_{n-2}\text{ \ \ for \ }g_{1}=-b.
\end{align*}
\end{definition}

Then we have the following theorem:

\begin{theorem}
For $n=2k,$%
\begin{equation*}
f_{k}=\mathrm{Pf}(\mathcal{F}_{n})\text{ \ \ and \ \ }g_{k}=\mathrm{Pf}(%
\mathcal{G}_{n}),
\end{equation*}%
where $f_{-1}=0,$ $f_{0}=1$ and $g_{-1}=0,$ $g_{0}=1.$
\end{theorem}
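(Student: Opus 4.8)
The plan is to prove both identities simultaneously by induction on $k$, the engine being the cofactor (Laplace-type) expansion of the Pfaffian along the first row: for a skew-symmetric matrix $A$ of even order $2m$,
\[ \mathrm{Pf}(A)=\sum_{j=2}^{2m}(-1)^{j}\,a_{1j}\,\mathrm{Pf}\bigl(A^{\widehat{1}\widehat{\jmath}}\bigr), \]
where $A^{\widehat{1}\widehat{\jmath}}$ denotes $A$ with rows and columns $1$ and $j$ deleted. The first task is to verify the base cases directly: $\mathcal{F}_{2}=\bigl(\begin{smallmatrix}0&b\\-b&0\end{smallmatrix}\bigr)$ and $\mathcal{G}_{2}=\bigl(\begin{smallmatrix}0&-b\\ b&0\end{smallmatrix}\bigr)$ give $\mathrm{Pf}(\mathcal{F}_2)=b=f_1$ and $\mathrm{Pf}(\mathcal{G}_2)=-b=g_1$, while the empty matrices give $\mathrm{Pf}(\mathcal{F}_0)=\mathrm{Pf}(\mathcal{G}_0)=1=f_0=g_0$.

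The heart of the argument is to establish, for $k\ge 2$, the two Pfaffian recurrences
\[ \mathrm{Pf}(\mathcal{F}_{2k})=b\,\mathrm{Pf}(\mathcal{G}_{2k-2})+a^{2}\,\mathrm{Pf}(\mathcal{F}_{2k-4}),\qquad \mathrm{Pf}(\mathcal{G}_{2k})=-b\,\mathrm{Pf}(\mathcal{F}_{2k-2})+a^{2}\,\mathrm{Pf}(\mathcal{G}_{2k-4}), \]
which are precisely the couple-recurrences once one identifies $\mathrm{Pf}(\mathcal{F}_{2k})$ with $f_k$ and $\mathrm{Pf}(\mathcal{G}_{2k})$ with $g_k$; the theorem then follows immediately by induction. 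To obtain the first recurrence I would expand $\mathrm{Pf}(\mathcal{F}_{2k})$ along row $1$. Only two entries of that row are nonzero, $a_{1,2}=a$ and $a_{1,2k}=b$, so the expansion collapses to two terms. It is clarifying to read off the surviving terms through the matching interpretation of the Pfaffian: $\mathcal{F}_{2k}$ is the skew-symmetric weight matrix of the $2\times k$ ladder graph whose rungs are the $\pm b$ anti-diagonal chords $\{i,2k+1-i\}$ and whose rails are the $a$ path-edges, and vertex $1$ is a corner matched either to its chord partner $2k$ (weight $b$) or to its rail neighbour $2$ (weight $a$).

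For the chord term, deleting rows and columns $1$ and $2k$ leaves the ladder on $\{2,\dots,2k-1\}$; after the order-preserving relabelling $j\mapsto j-1$ this is literally a matrix of the shape in the Definition of size $2(k-1)$, but every chord weight $(-1)^{i+1}b$ is now carried at the shifted index $i-1$, i.e.\ it appears with the opposite of the sign it would have in $\mathcal{F}_{2k-2}$. Hence the reduced matrix is $\mathcal{G}_{2k-2}$, and since $(-1)^{2k}=1$ this term contributes $+b\,\mathrm{Pf}(\mathcal{G}_{2k-2})$. This sign flip on the chords under one reduction step is exactly what couples $f$ to $g$, and it is the conceptual point of the proof. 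For the rail term, deleting rows and columns $1$ and $2$ (with sign $(-1)^{2}=1$) strips vertex $1$'s chord away from vertex $2k$, so in the reduced matrix the corner $2k$ has the unique neighbour $2k-1$; expanding once more along that degree-one row (the analogous expansion holds along any row) forces the factor $a$ and removes $2k-1,2k$, leaving the ladder on $\{3,\dots,2k-2\}$, whose chord signs return to the $\mathcal{F}$ convention so that it relabels to $\mathcal{F}_{2k-4}$ and contributes $+a^{2}\,\mathrm{Pf}(\mathcal{F}_{2k-4})$. The same computation applied to $\mathcal{G}_{2k}$, where the entry at $(1,2k)$ now reads $-b$ and one reduction sends $\mathcal{G}\mapsto\mathcal{F}$, yields the second recurrence.

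The routine parts are the base cases and the identification of the relabelled submatrices. The main obstacle — and the only place where genuine care is needed — is the sign bookkeeping: one must pin down the $(-1)^{j}$ factors from the two successive Pfaffian expansions, confirm that the order-preserving relabelling of indices introduces no overall Pfaffian sign, and check that the chord-weight sign exactly accounts for the $\mathcal{F}\leftrightarrow\mathcal{G}$ interchange so that the coefficients emerge as $+b$, $-b$ and $+a^{2}$ rather than with stray signs. Once these signs are verified for the two reduction steps, the induction closes and delivers $f_k=\mathrm{Pf}(\mathcal{F}_{2k})$ and $g_k=\mathrm{Pf}(\mathcal{G}_{2k})$.
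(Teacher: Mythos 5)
Your proposal is correct and takes essentially the same route as the paper's proof: a two-step Pfaffian expansion (yours along the first row, the paper's along the last column) yielding the coupled recurrence $\mathrm{Pf}(\mathcal{F}_{2k})=b\,\mathrm{Pf}(\mathcal{G}_{2k-2})+a^{2}\,\mathrm{Pf}(\mathcal{F}_{2k-4})$, resting on the same key observation that one boundary reduction flips the chord signs and interchanges $\mathcal{F}$ with $\mathcal{G}$ while two reductions restore $\mathcal{F}$. The differences are cosmetic (row versus column, the graph-matching language, and your explicit treatment of the empty-matrix base case $\mathrm{Pf}(\mathcal{F}_{0})=1$ needed when $k=2$, which the paper leaves implicit).
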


\begin{proof}
Let us prove the theorem by using mathematical induction method. For $k=1$, 
\begin{equation*}
\mathcal{F}_{2}=\left( 
\begin{array}{cc}
A_{1} & B_{1} \\ 
-B_{1} & A_{1}%
\end{array}%
\right) =\left( 
\begin{array}{cc}
0 & b \\ 
-b & 0%
\end{array}%
\right) \text{ and }\mathcal{G}_{2}=\left( 
\begin{array}{cc}
A_{1} & -B_{1} \\ 
B_{1} & A_{1}%
\end{array}%
\right) =\left( 
\begin{array}{cc}
0 & -b \\ 
b & 0%
\end{array}%
\right).
\end{equation*}%
In this case, 
\begin{equation*}
f_{1}=\mathrm{Pf}(\mathcal{F}_{2})=b,\text{ \ \ \ \ \ \ \ }g_{1}=\mathrm{Pf}(%
\mathcal{G}_{2})=-b.
\end{equation*}%
Assume that the recurrences hold for all $t\leq k.$ Then they hold for $%
k=t+1 $.%
\begin{equation}
\mathcal{F}_{2t+2}=\left( 
\begin{array}{c|c}
A_{t+1} & B_{t+1} \\ \hline
(-1)^{t+1}B_{t+1} & A_{t+1}%
\end{array}%
\right) =\left( 
\begin{tabular}{c|cccc|c}
$0$ & $a$ & $0$ & $\cdots $ & $0$ & $b$ \\ \hline
$-a$ &  &  &  &  & $0$ \\ 
$0$ &  & $A_{t}$ & $-B_{t}$ &  & $\vdots $ \\ 
$\vdots $ &  & $(-1)^{t+1}B_{t}$ & $A_{t}$ &  & $0$ \\ 
$0$ &  &  &  &  & $a$ \\ \hline
$-b$ & $0$ & $\cdots $ & $0$ & $-a$ & $0$%
\end{tabular}%
\right).  \label{10}
\end{equation}%
From the expansion formula along with $2t+2$ column of (\ref{10}), it
follows that 
\begin{equation}
\mathrm{Pf}(\mathcal{F}_{2t+2})=b\mathrm{Pf}(\mathcal{G}_{2t})+a\mathrm{Pf}(%
\mathcal{M}_{2t})=bg_{t}+a\mathrm{Pf}(\mathcal{M}_{2t}),  \label{20}
\end{equation}%
where%
\begin{equation}
\mathcal{M}_{2t}=\left( 
\begin{tabular}{cc|cccc}
$0$ & $a$ & $0$ & $\cdots $ & $\cdots $ & $0$ \\ 
$-a$ & $0$ & $a$ & $0$ & $\cdots $ & $0$ \\ \hline
$0$ & $a$ &  &  &  &  \\ 
$\vdots $ & $0$ &  & $A_{t-1}$ & $B_{t-1}$ &  \\ 
$\vdots $ & $\vdots $ &  & $(-1)^{t-1}B_{t-1}$ & $A_{t-1}$ &  \\ 
$0$ & $0$ &  &  &  & 
\end{tabular}%
\right)  \label{30}
\end{equation}%
From the expansion formula along with 1st row of (\ref{30}), it follows that 
\begin{equation}
\mathrm{Pf}(\mathcal{M}_{2t})=a\mathrm{Pf}(\mathcal{F}_{2t-2})=af_{t-1}.
\label{40}
\end{equation}%
From (\ref{20}) and (\ref{40}), we have%
\begin{equation*}
f_{t+1}=bg_{t}+a^{2}f_{t-1}.
\end{equation*}%
The recurrence for $g_{t+1}$ can be obtained in a similar manner.
\end{proof}

\begin{corollary}
$f_{n}=(-1)^{n-1}bf_{n-1}+a^{2}f_{n-2}$ with $f_{-1}=0$\ and $f_{1}=1.$
\end{corollary}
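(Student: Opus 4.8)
The plan is to decouple the system of Definition~3 by proving the auxiliary identity $g_n=(-1)^n f_n$ for every $n\ge -1$, and then to substitute it into the defining recurrence for $f_n$. Once this sign relation between the two sequences is isolated, the factor $(-1)^{n-1}$ in the statement appears automatically and every occurrence of $g$ disappears. The one genuinely creative step is guessing the relation $g_n=(-1)^n f_n$; everything after that is bookkeeping.

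First I would verify the identity on the initial data recorded in Theorem~4. Since $f_{-1}=g_{-1}=0$ and $f_0=g_0=1$, we have $g_{-1}=(-1)^{-1}f_{-1}=0$ and $g_0=(-1)^0 f_0=1$, so the two base cases hold. For the inductive step, assume $g_m=(-1)^m f_m$ for all $m<n$. Using the coupled recurrence $g_n=-bf_{n-1}+a^2 g_{n-2}$ together with $g_{n-2}=(-1)^{n-2}f_{n-2}=(-1)^n f_{n-2}$ gives $g_n=-bf_{n-1}+(-1)^n a^2 f_{n-2}$. On the other hand $(-1)^n f_n=(-1)^n\big(bg_{n-1}+a^2 f_{n-2}\big)$, and substituting $g_{n-1}=(-1)^{n-1}f_{n-1}$ collapses the first term to $-bf_{n-1}$ because $(-1)^n(-1)^{n-1}=-1$. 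Hence $(-1)^n f_n=-bf_{n-1}+(-1)^n a^2 f_{n-2}$, which is exactly the expression just obtained for $g_n$, closing the induction.

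With the identity available, the corollary is immediate: starting from $f_n=bg_{n-1}+a^2 f_{n-2}$ and replacing $g_{n-1}$ by $(-1)^{n-1}f_{n-1}$ yields $f_n=(-1)^{n-1}bf_{n-1}+a^2 f_{n-2}$, with seeds $f_{-1}=0$ and $f_0=1$ (the printed ``$f_1=1$'' should read $f_0=1$, after which $f_1=(-1)^0 b f_0+a^2 f_{-1}=b$ reproduces Definition~3). I do not expect a real obstacle here once the sign relation is in hand. A purely mechanical alternative would be to eliminate $g$ by solving $bg_{n-1}=f_n-a^2 f_{n-2}$ and feeding this into the $g$-recurrence, but that route produces the step-two recurrence $f_{n+2}=(2a^2-b^2)f_n-a^4 f_{n-2}$ rather than the clean first-order form with alternating sign requested in the statement, so the induction above is the preferable path.
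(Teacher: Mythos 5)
Your proof is correct, and in fact it supplies something the paper omits: the corollary is stated with no proof at all, so there is no argument of the authors' to compare against. Your key identity $g_{n}=(-1)^{n}f_{n}$ is exactly the decoupling that the corollary tacitly assumes, and your induction checks out --- the base cases $g_{-1}=0=(-1)^{-1}f_{-1}$, $g_{0}=1=(-1)^{0}f_{0}$ hold, and the step $g_{n}=-bf_{n-1}+(-1)^{n}a^{2}f_{n-2}=(-1)^{n}f_{n}$ follows from the two coupled recurrences as you compute. Substituting back then gives $f_{n}=(-1)^{n-1}bf_{n-1}+a^{2}f_{n-2}$ immediately. You are also right to flag the misprint in the statement: the initial conditions should be $f_{-1}=0$ and $f_{0}=1$ (whence $f_{1}=b$, consistent with Definition 3 and with Table 1, where $f_{1}=b=1$ only in the special cases $b=1$); as printed, ``$f_{1}=1$'' does not match the sequence the rest of the paper uses. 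Your side remark that eliminating $g$ mechanically yields only the step-two recurrence $f_{n+2}=(2a^{2}-b^{2})f_{n}-a^{4}f_{n-2}$ is also accurate (and, incidentally, that is the characteristic recurrence governing the $w_{i}$ in the determinant section), which is why the sign identity is the right tool here.
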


\section{Determinant of the skew centrosymmetric matrix}

In this section, we consider determinant of the matrix $\mathcal{F}_{n}$ ($%
n=2k$). It is a well-known fact \cite{14} for block matrices:%
\begin{equation*}
\left\vert 
\begin{array}{cc}
A & B \\ 
C & D%
\end{array}%
\right\vert =\det (AD-BC)
\end{equation*}%
if it verifies $AC=CA$. Taking into account this property, determinant of
the matrix $\mathcal{F}_{n}$ is

\begin{equation*}
\left\vert \mathcal{F}_{n}\right\vert =\left\vert \mathcal{T}_{k}\right\vert
=\det \left( 
\begin{array}{ccccc}
-a^{2}+b^{2} & 0 & a^{2} &  &  \\ 
0 & -2a^{2}+b^{2} & 0 & \ddots &  \\ 
a^{2} & 0 & \ddots & \ddots & a^{2} \\ 
& \ddots & \ddots & -2a^{2}+b^{2} & 0 \\ 
&  & a^{2} & 0 & -a^{2}+b^{2}%
\end{array}%
\right) _{k\times k}.
\end{equation*}

Sogabe and El-Mikkawy \cite{10} considered a fast block diagonalization of $%
k $-tridiagonal matrices using permutation matrices. Exploiting this method,
we can rearrange the matrix $\mathcal{T}_{k}$.

\textbf{(\textit{i}) For }$\mathbf{k}$\textbf{\ is odd}

For this let us define the following matrices: 
\begin{equation*}
H_{\frac{k-1}{2}}=\left\{ 
\begin{array}{l}
-2a^{2}+b^{2},\text{ for }i=j \\ 
a^{2},\text{ for }i=j+1\text{ and }j=i+1 \\ 
0,\text{ otherwise}%
\end{array}%
\right. \text{ }
\end{equation*}%
and%
\begin{equation*}
K_{\frac{k+1}{2}}=\left\{ 
\begin{array}{l}
-a^{2}+b^{2},\text{ }i=j=1\text{ and }i=j=\frac{k+1}{2} \\ 
-2a^{2}+b^{2},\text{ }i=j=2(1)\frac{k-1}{2} \\ 
a^{2},\text{ for }i=j+1\text{ and }j=i+1 \\ 
0,\text{ otherwise}%
\end{array}%
\right. .
\end{equation*}%
Then,

\begin{equation*}
P^{T}\mathcal{F}_{k}P=\left( 
\begin{array}{c|c}
H_{\frac{k-1}{2}} & 0 \\ \hline
0 & K_{\frac{k+1}{2}}%
\end{array}%
\right), \text{ }
\end{equation*}%
where permutation matrix $P$ is determined by using method in \cite{10}.
Obviously, 
\begin{equation*}
\det (P^{T}\mathcal{T}_{k}P)=\det \mathcal{T}_{k}=\det \mathcal{F}_{n}=\det
(H_{\frac{k-1}{2}})\det (K_{\frac{k+1}{2}}).
\end{equation*}

\textbf{(\textit{ii})} \textbf{For }$\mathbf{k}$\textbf{\ is even}

Let us define

\begin{equation*}
N_{\frac{k}{2}}=\left\{ 
\begin{array}{l}
-a^{2}+b^{2},\text{ }i=j=\frac{k}{2} \\ 
-2a^{2}+b^{2},\text{ }i=j=1(1)\frac{k}{2}-1 \\ 
a^{2},\text{ for }i=j+1\text{ and }j=i+1 \\ 
0,\text{ otherwise}%
\end{array}%
\right.
\end{equation*}%
and

\begin{equation*}
Q_{\frac{k}{2}}=\left\{ 
\begin{array}{l}
-a^{2}+b^{2},\text{ }i=j=1 \\ 
-2a^{2}+b^{2},\text{ }i=j=2(1)\frac{k}{2} \\ 
a^{2},\text{ for }i=j+1\text{ and }j=i+1 \\ 
0,\text{ otherwise}%
\end{array}%
\right. .
\end{equation*}%
Then,%
\begin{equation*}
P^{T}\mathcal{F}_{k}P=\left( 
\begin{array}{c|c}
N_{\frac{k}{2}} & 0 \\ \hline
0 & Q_{\frac{k}{2}}%
\end{array}%
\right) \text{ }.
\end{equation*}%
Obviously, 
\begin{equation*}
\det (P^{T}\mathcal{T}_{k}P)=\det \mathcal{T}_{k}=\det \mathcal{F}_{n}=\det
(N_{\frac{k}{2}})\det (Q_{\frac{k}{2}}).
\end{equation*}%
It can be seen that $\det (N_{\frac{k}{2}})=\det (Q_{\frac{k}{2}})$.

El-Mikkawy \cite{11} obtained determinant of tridiagonal matrix. That is,%
\begin{equation*}
v_{i}=\left\vert 
\begin{array}{ccccc}
d_{1} & a_{1} & 0 & \ldots & 0 \\ 
b_{2} & d_{2} & a_{2} & \ddots & \vdots \\ 
0 & b_{3} & d_{3} & \ddots & 0 \\ 
\vdots & \ddots & \ddots & \ddots & a_{i-1} \\ 
0 & \ldots & 0 & b_{i} & d_{i}%
\end{array}%
\right\vert,
\end{equation*}%
where $v_{i}=d_{i}v_{i-1}-b_{i}a_{i-1}v_{i-2}$ for $v_{0}=1$ and $v_{-1}=0$.
By exploiting \cite{11} and Laplace expansion:

For\ $k$ is even%
\begin{equation*}
\det (N_{\frac{k}{2}})=\det (Q_{\frac{k}{2}})=(-a^{2}+b^{2})w_{\frac{k}{2}%
-1}-a^{4}w_{\frac{k}{2}-2}.
\end{equation*}

For $k$ is odd%
\begin{align*}
&\det (K_{\frac{k+1}{2}})=\left( -a^{2}+b^{2}\right) ^{2}w_{\frac{k-3}{2}%
}-2a^{4}(-a^{2}+b^{2})w_{\frac{k-5}{2}}+a^{8}w_{\frac{k-7}{2}}, \\
&\det (H_{\frac{k-1}{2}})=w_{\frac{k-1}{2}},
\end{align*}%
where 
\begin{equation*}
w_{i}=\left\vert 
\begin{array}{cccc}
-2a^{2}+b^{2} & a^{2} & \ldots & 0 \\ 
a^{2} & -2a^{2}+b^{2} & \ddots & \vdots \\ 
\vdots & \ddots & \ddots & a^{2} \\ 
0 & \ldots & a^{2} & -2a^{2}+b^{2}%
\end{array}%
\right\vert.
\end{equation*}%
Here $w_{i}=(-2a^{2}+b^{2})w_{i-1}-a^{4}w_{i-2}$ for $w_{0}=1$ and $w_{-1}=0$%
.

Consequently, for $n=2k$,

\begin{enumerate}
\item[$i$)] If $k$ is odd, 
\begin{equation*}
\det \mathcal{F}_{n}=\det \mathcal{T}_{k}=w_{\frac{k-1}{2}}\left( \left(
-a^{2}+b^{2}\right) ^{2}w_{\frac{k-3}{2}}-2a^{4}(-a^{2}+b^{2})w_{\frac{k-5}{2%
}}+a^{8}w_{\frac{k-7}{2}}\right).
\end{equation*}

\item[$ii$)] If $k$ is even, $\det \mathcal{F}_{n}=\det \mathcal{T}%
_{k}=\left( (-a^{2}+b^{2})w_{\frac{k}{2}-1}-a^{4}w_{\frac{k}{2}-2}\right)
^{2}. $
\end{enumerate}

\section{Examples}

Let us consider the matrix $\mathcal{F}_{n}~$($n=2k$). Then examples of the
Pfaffians and the determinants are shown in Tables 1 and 2 respectively.
Here $F_{n},P_{n}$ and $J_{n}$ are $n$th Fibonacci, Pell and Jacobsthal
numbers, respectively.

\begin{table}[h]
\caption{Examples of the Pfaffians.}
\label{tbl:Pfaffian}\centering
\begin{tabular}{c|l|l|l}
& $a=i,b=1$ & \ $a=i,b=2$ & $a=i\sqrt{2},b=1$ \\ \cline{2-4}
$k$ & \hphantom{-}\hphantom{-} $Pf{\small (}\mathcal{F}_{2k}{\small )}$ & %
\hphantom{-}\hphantom{-} $Pf{\small (}\mathcal{F}_{2k}{\small )}$ & %
\hphantom{-}\hphantom{-} $Pf{\small (}\mathcal{F}_{2k}{\small )}$ \\ 
\hline\hline
\multicolumn{1}{l|}{$\ \ \ \ \ \ \ \ 1$} & $\hphantom{-} F_{2}=1$ & $%
\hphantom{-} P_{2}=2$ & $\hphantom{-} J_{2}=1 $ \\ 
\multicolumn{1}{l|}{$\ \ \ \ \ \ \ \ 2$} & $-F_{3}=-2$ & $-P_{3}=-5$ & $%
-J_{3}=-3$ \\ 
\multicolumn{1}{l|}{$\ \ \ \ \ \ \ \ 3$} & $-F_{4}=-3$ & $-P_{4}=-12$ & $%
-J_{4}=-5$ \\ 
$4$ & $\hphantom{-} F_{5}=5$ & $\hphantom{-} P_{5}=29$ & $\hphantom{-}
J_{5}=11$ \\ 
$5$ & $\hphantom{-} F_{6}=8$ & $\hphantom{-} P_{6}=70$ & $\hphantom{-}
J_{6}=21$ \\ 
$6$ & $-F_{7}=-13$ & $-P_{7}=-169$ & $-J_{7}=-43$ \\ 
$7$ & $-F_{8}=-21$ & $-P_{8}=-408$ & $-J_{8}=-85$ \\ 
$8$ & $\hphantom{-} F_{9}=34$ & $\hphantom{-} P_{9}=985$ & $\hphantom{-}
J_{9}=171$ \\ 
$\vdots $ & \hphantom{-}\hphantom{-}\hphantom{-}\hphantom{-} $\vdots $ & %
\hphantom{-}\hphantom{-}\hphantom{-}\hphantom{-} $\vdots $ & \hphantom{-}%
\hphantom{-}\hphantom{-}\hphantom{-} $\vdots $ \\ \hline
$\equiv 0,1(\mathrm{mod}4)$ & $\hphantom{-} F_{k+1}$ & $\hphantom{-} P_{k+1}$
& $\hphantom{-} J_{k+1}$ \\ 
$\equiv 2,3(\mathrm{mod}4)$ & $-F_{k+1}$ & $-P_{k+1}$ & $-J_{k+1}$%
\end{tabular}%
%
%
%
\end{table}
\begin{table}[h]
\caption{Examples of the determinants.}
\label{tbl:determinant}\centering
\begin{tabular}{c|c|c|c}
& $a=i,b=1$ & $a=i,b=2$ & $a=i\sqrt{2},b=1$ \\ \cline{2-4}
$k$ & $\det {\small (}\mathcal{F}_{2k}{\small )}$ & $\det {\small (}\mathcal{%
F}_{2k}{\small )}$ & $\det {\small (}\mathcal{F}_{2k}{\small )}$ \\ 
\hline\hline
\multicolumn{1}{c|}{${\small 1}$} & ${\small F}_{2}^{2}$ & ${\small P}%
_{2}^{2}$ & ${\small J}_{2}^{2}$ \\ 
\multicolumn{1}{c|}{${\small 2}$} & ${\small F}_{3}^{2}$ & ${\small P}%
_{3}^{2}$ & ${\small J}_{3}^{2}$ \\ 
\multicolumn{1}{c|}{{\small \ }$\ 3\ \ $} & ${\small F}_{4}^{2}$ & ${\small P%
}_{4}^{2}$ & ${\small J}_{4}^{2}$ \\ 
${\small 4}$ & ${\small F}_{5}^{2}$ & ${\small P}_{5}^{2}$ & ${\small J}%
_{5}^{2}$ \\ 
${\small 5}$ & ${\small F}_{6}^{2}$ & ${\small P}_{6}^{2}$ & ${\small J}%
_{6}^{2}$ \\ 
${\small 6}$ & ${\small F}_{7}^{2}$ & ${\small P}_{7}^{2}$ & ${\small J}%
_{7}^{2}$ \\ 
${\small 7}$ & ${\small F}_{8}^{2}$ & ${\small P}_{8}^{2}$ & ${\small J}%
_{8}^{2}$ \\ 
${\small 8}$ & ${\small F}_{9}^{2}$ & ${\small P}_{9}^{2}$ & ${\small J}%
_{9}^{2}$ \\ 
${\small \vdots }$ & ${\small \vdots }$ & ${\small \vdots }$ & ${\small %
\vdots }$ \\ 
${\small t}$ & ${\small F}_{t+1}^{2}$ & ${\small P}_{t+1}^{2}$ & ${\small J}%
_{t+1}^{2}$%
\end{tabular}%
\end{table}

\end{document}